\def\ds{\displaystyle}
\def\eps{{\varepsilon}}
\def\O{\Omega}
\def\R{\mathbb{R}}
\def\E{\mathcal{E}}
\def\F{\mathcal{F}}
\def\LL{\mathcal{L}}
\def\M{\mathcal{M}}
\def\Dr{D}
\newcommand{\be}{\begin{equation}}
\newcommand{\ee}{\end{equation}}
\newcommand{\bib}[4]{\bibitem{#1}{\sc#2: }{\it#3. }{#4.}}
\newcommand{\cp}{\mathop{\rm cap}\nolimits}
\numberwithin{equation}{section}
\theoremstyle{plain}
\newtheorem{teo}{Theorem}[section]
\newtheorem{lemma}[teo]{Lemma}
\newtheorem{prop}[teo]{Proposition}
\theoremstyle{remark}
\newtheorem{oss}[teo]{Remark}
\title{Worst-case shape optimization for the Dirichlet energy}
\author{Jos\'e Carlos Bellido, Giuseppe Buttazzo, Bozhidar Velichkov}
\begin{document}

\maketitle

\begin{abstract}
We consider the optimization problem for a shape cost functional $F(\O,f)$ which depends on a domain $\O$ varying in a suitable admissible class and on a ``right-hand side'' $f$. More precisely, the cost functional $F$ is given by an integral which involves the solution $u$ of an elliptic PDE in $\O$ with right-hand side $f$; the boundary conditions considered are of the Dirichlet type. When the function $f$ is only known up to some degree of uncertainty, our goal is to obtain the existence of an optimal shape in the worst possible situation. Some numerical simulations are provided, showing the difference in the optimal shape between the case when $f$ is perfectly known and the case when only the worst situation is optimized.
\end{abstract}

\textbf{Keywords:} shape optimization, Dirichlet energy, worst-case optimization

\textbf{2010 Mathematics Subject Classification:} 49J45, 49R05, 35P15, 47A75, 35J25

\section{Introduction}\label{sintro}

In worst-case optimization problems one has two sets $X,Y$ of admissible choices and a cost functional $F:X\times Y\to\overline\R$; the goal is to minimize $F$ over $X$ when the worst choice with respect to $Y$ occurs. In other words, we consider the optimization problem
$$\min\big\{\F(x)\ :\ x\in X\big\}$$
where the cost functional $\F$ is defined by
$$\F(x)=\sup\big\{F(x,y)\ :\ y\in Y\big\}.$$
For a clear and extended presentation of worst-case optimization problems in structural mechanics we refer to \cite{all14}.

In the present paper we consider a worst-case shape optimization problem for elliptic PDEs with Dirichlet boundary conditions. More precisely, we fix a bounded domain $D\subset\R^d$ and for every domain $\O\subset D$ and $f\in L^2(D)$ we consider the state function $u$, solution of the PDE
$$-\Delta u=f\quad\hbox{in}\quad\O,\qquad u\in H^1_0(\O),$$
and a cost functional of the form 
$$F(\O,f)=\int_\O j(x,u)\,dx,$$
where $j:\Dr\times\R\to\R$ is a decreasing function in the second variable. If we assume that the right-hand side $f$ may vary under an unknown small perturbation, we obtain the worst-case shape functional
$$\F(\O)=\sup\big\{F(\O,f+g)\ :\ \|g\|_{L^2(D)}\le\delta\big\},$$
where $\delta>0$ is a fixed real number. In Theorem \ref{excontr} we show that for small $\delta$, there exists a solution to the shape optimization problem
$$\min\Big\{\F(\O)\ :\ \O\subset\Dr,\ |\O|\le m\Big\},$$
where we indicate by $|\cdot|$ the Lebesgue measure in $\R^d$.

Of particular interest is the case when $\ds j(x,u)=-\frac12 f(x) u$, when the cost functional $F(\O,f)$ becomes the Dirichlet energy
$$E(\O,f)=\min\left\{\int_\O\Big(\frac12|\nabla u|^2-fu\Big)\,dx\ :\ u\in H^1_0(\O)\right\},$$
and we denote by $\E$ the corresponding worst-case functional. We discuss this case in Section \ref{sexis} since the specificity of the functional allows us to obtain the existence of an optimal domain by applying some classical results of \cite{bdm93} for decreasing shape functionals. In Section \ref{sradial} we consider the case $f=constant$ or more generally $f(x)=\tilde f(|x|)$ with $\tilde f(r)$ decreasing; we show (see Theorem \ref{trad}) that in this situation, if $D$ is large enough, the solution of the worst-case shape optimization problem
$$\min\Big\{\E(\O)\ :\ \O\subset D,\ |\O|\le m\Big\},$$
is actually a ball of measure $m$.

The last Section \ref{snumer} contains some numerical computations on a particular example.

\section{Capacity, quasi-open sets and capacitary measures}
Here below we summarize the main tools that we use in the sequel; the interested reader can find a more detailed presentation of them in \cite{bubu05}.

\bigskip\noindent{\bf Capacity and Sobolev functions.} We define the capacity of a set $E\subset\R^d$ as
$$\cp(E)=\inf\left\{\int_{\R^d}|\nabla u|^2\,dx\ :\ u\in H^1(\R^d);\ u=1\ \text{ in a neighbourhood of }E\right\}.$$
A classical result gives that the Sobolev functions are defined up to a set of zero capacity. In fact, we have that for every $u\in H^1(\R^d)$ the set of Lebesgue points
$$\LL(u)=\left\{x_0\in\R^d\ :\ \lim_{r\to 0}\frac1{|B_r|}\int_{B_r(x_0)}u(x)\,dx\ \hbox{ exists}\right\},$$
is such that $\cp\big(\R^d\setminus\LL(u)\big)=0$. Thus, we can identify a Sobolev function with its equivalence class with respect to the relation $u\sim v$, iff $\cp(\{u\ne v\})=0$.

\bigskip\noindent{\bf Quasi-open sets and Sobolev spaces.} We say that the set $\O\subset\R^d$ is quasi-open if for every $\eps>0$ there is an open set $\O_\eps$ such that $\cp(\O_\eps)\le\eps$ and $\O\cup\O_\eps$ is open. Given a quasi-open set $\O\subset\R^d$ we define the Sobolev space $H^1_0(\O)$ as
$$H^1_0(\O)=\Big\{u\in H^1(\R^d)\ :\ \cp\big(\{u\ne0\}\setminus\O\big)=0\Big\},$$
and we notice that this definition coincides with the usual one in the case when $\O$ is open. For every quasi-open set $\O$, the Sobolev space $H^1_0(\O)$ is a closed subspace of $H^1(\R^d)$ with respect to the Sobolev norm
$$\|u\|_{H^1}=\Big(\|u\|_{L^2}^2+\|\nabla u\|_{L^2}^2\Big)^{1/2}.$$

\bigskip\noindent{\bf PDEs on quasi-open sets.} For a quasi-open set $\O\subset\R^d$ and a function $f\in L^2(\O)$ we say that $u$ is a solution of the PDE
\be\label{pdeufo}
-\Delta u=f\quad\text{in }\O,\qquad u\in H^1_0(\O),
\ee
if we have that $u\in H^1_0(\O)$ and
$$\int_\O\nabla\phi\cdot\nabla u\,dx=\int_\O\phi f\,dx,\quad\text{for every }\phi\in H^1_0(\O).$$
It is well-known that $u\in H^1_0(\O)$ is a solution of \eqref{pdeufo} if and only if it minimizes in $H^1_0(\O)$ the functional
\be\label{dirfunct}
J(\O,u,f)=\int_\O\Big(\frac12|\nabla u|^2-fu\Big)\,dx.
\ee
In this framework the maximum principle states that if $f>0$, then $\{u>0\}=\O$ up to a set of zero capacity. Thus, we can identify any quasi-open set $\O$ of finite measure with the level set $\{w_\O>0\}$ where $w_\O$ is the solution of
$$-\Delta w_\O=1\quad\text{in }\O,\qquad w_\O\in H^1_0(\O).$$
In particular, we can endow the family of all quasi-open subsets of $D$ with the metric
\be\label{metric}
d_\gamma(\O_1,\O_2)=\|w_{\O_1}-w_{\O_2}\|_{L^1(D)}.
\ee

\bigskip\noindent{\bf Capacitary measures.} We say that a positive measure $\mu$ is of a capacitary type if $\mu(E)=0$ for every set $E\subset\R^d$ such that $\cp(E)=0$. Since every $u\in H^1(\R^d)$ is defined up to a set of zero capacity, we have that the integral $\int_{\R^d}u^2\,d\mu$ is well-defined (finite or infinite). We define the Sobolev space $H^1_\mu$ as
$$H^1_\mu=\left\{u\in H^1(\R^d)\ :\ \int_{\R^d}u^2\,d\mu<+\infty\right\}.$$
Notice that if $\O$ is a quasi-open set, then  $H^1_0(\O)=H^1_{\mu_\O}$, where the capacitary measure $\mu_\O$ is defined as
$$\mu_\O(E)=\begin{cases}
0&\text{if }\cp(E\setminus\O)=0\\
+\infty&\text{if }\cp(E\setminus\O)>0.
\end{cases}$$

\bigskip\noindent{\bf PDEs involving capacitary measures.} For a capacitary measure $\mu$ and a function $f\in L^2(\R^d)$ we say that $u$ is a solution to the PDE
\be\label{mupde}
-\Delta u+\mu u=f\quad\text{in}\quad\R^d,\qquad u\in H^1_\mu,
\ee
if we have that $u\in H^1_\mu$ and
$$\int_{\R^d}\nabla\phi\cdot\nabla u\,dx+\int_{\R^d}u\phi\,d\mu=\int_{\R^d}\phi f\,dx,\quad\text{for every}\quad \phi\in H^1_\mu.$$
As in the classical case we have that $u$ is a solution of \eqref{mupde} if and only if $u$ minimizes in $H^1_\mu$ the functional
\be\label{jmu}
J(\mu,u,f)=\frac12\int_{\R^d}|\nabla u|^2\,dx+\frac12\int_{\R^d}u^2\,d\mu-\int_{\R^d}uf\,dx.
\ee
Moreover we have the following maximum principle: If $\mu_1\ge \mu_2$ (which means that $\mu_1(E)\ge\mu_2(E)$ for every quasi-open set $E$) and $f_1\le f_2$, then $u_1\le u_2$, where $u_i$ is the solution of
$$-\Delta u_i+\mu_i u_i=f_i\quad\text{in }\R^d,\qquad u_i\in H^1_{\mu_i}.$$

\bigskip\noindent{\bf The metric space of capacitary measures.} Suppose that $\Dr\subset \R^d$ is an open set of finite measure. We denote by $QO(\Dr)$ the metric space of quasi-open subsets of $\Dr$ endowed with the metric \eqref{metric}. Then the completion of $QO(\Dr)$ with respect to $d_\gamma$ is given by the set
$$\M_{\cp}(\Dr)=\Big\{\mu\text{ capacitary measure}\ :\ \mu(E)=+\infty,\text{ for every $E$ such that }\cp(E\setminus\Dr)>0\Big\},$$
with the metric
$$d_\gamma(\mu_1,\mu_2)=\|w_{\mu_1}-w_{\mu_2}\|_{L^1(D)},$$
where $w_\mu$ denotes the solution of
$$-\Delta w_\mu+\mu w_\mu=1\quad\text{in }\R^d,\qquad w_\mu\in H^1_\mu.$$
it was proved in \cite{dmmo87} that $\big(\M_{\cp}(\Dr),d_\gamma\big)$ is a compact metric space.

\bigskip\noindent{\bf Continuous functionals on $\M_{\cp}(\Dr)$.} The resolvent operator is continuous with respect to the metric above, in fact if $f_n\in L^2(\Dr)$ is a sequence converging in $L^2(\Dr)$ to $f_\infty\in L^2(\Dr)$ and $\mu_n$ is a sequence of capacitary measures converging to $\mu_\infty\in\M_{\cp}(\Dr)$ in the $d_\gamma$ distance then the sequence of solutions $u_n$ of the PDEs
$$-\Delta u_n+\mu_n u_n=f_n\quad\text{in }\Dr,\qquad u_n\in H^1_{\mu_n},$$
converges strongly in $L^2(\Dr)$ to the solution of
$$-\Delta u_\infty+\mu_\infty u_\infty=f_\infty\quad\text{in }\Dr,\qquad u_\infty\in H^1_{\mu_\infty}.$$
Thus, all the functionals of the form 
\be\label{fjmu}
F(\mu)=\int_\Dr j(x,u)\,dx,
\ee
where $j:\Dr\times\R\to\R$ is a given function, and $u$ is the solution of
$$-\Delta u+\mu u=f\quad\text{in }\Dr,\qquad u\in H^1_\mu,$$
are lower semi-continuous with respect to $d_\gamma$, provided $j(x,s)$ is continuous with respect to $s$ and bounded from below as
$$j(x,s)\ge-c|s|^2+a(x)\qquad\hbox{with $c>0$ and }a\in L^1(D).$$

\bigskip\noindent{\bf Existence of optimal sets.} The following result was proved in \cite{bdm93} and represents the main tool for proving existence of optimal domains.
\begin{teo}[Buttazzo-Dal Maso]\label{teobudm}
Suppose that $\Dr$ is a bounded open set and $\F$ is a functional on the family of quasi-open sets such that
\begin{itemize}
\item $\F$ is decreasing with respect to the set inclusion,
\item $\F$ is lower semi-continuous with respect to the $\gamma$-distance.  
\end{itemize}
Then, for every $m>0$ there is a solution to the problem 
$$\min\Big\{\F(\O)\ :\ |\O|\le m,\ \O\ \text{quasi-open},\ \O\subset\Dr\Big\}.$$
\end{teo}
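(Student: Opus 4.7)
The plan is the direct method on the metric space $(\M_{\cp}(\Dr),d_\gamma)$: extract a $d_\gamma$-convergent subsequence from a minimizing sequence, identify the limit with a quasi-open set via its $1$-resolvent, and then use the two hypotheses on $\F$ (monotonicity and $\gamma$-lower semi-continuity) to propagate the minimality to a quasi-open set.

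I would begin by choosing a minimizing sequence $(\O_n)$ of quasi-open subsets of $\Dr$ with $|\O_n|\le m$ and $\F(\O_n)\to\alpha:=\inf\F$. By the compactness of $(\M_{\cp}(\Dr),d_\gamma)$ recalled above, a subsequence of $(\mu_{\O_n})$ converges to some $\mu\in\M_{\cp}(\Dr)$. Unfolding the definition of $d_\gamma$, this means $w_{\O_n}\to w_\mu$ in $L^1(\Dr)$ and, along a further subsequence, quasi-everywhere.

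Next I would introduce the candidate minimizer $\O^*:=\{w_\mu>0\}$, which is quasi-open because $w_\mu\in H^1(\R^d)$. For q.e.\ $x\in\O^*$ one has $w_{\O_n}(x)\to w_\mu(x)>0$, so $x\in\O_n$ for all sufficiently large $n$; Fatou's lemma applied to the indicators $\ind_{\O_n}$ then gives $|\O^*|\le\liminf_n|\O_n|\le m$, so $\O^*$ is admissible.

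The remaining and delicate point is $\F(\O^*)\le\alpha$. The key structural fact is that the $\gamma$-limit satisfies $\mu\ge\mu_{\O^*}$ as capacitary measures, equivalently $H^1_\mu\subset H^1_0(\O^*)$: this follows because $\mu$ must assign infinite mass to every positive-capacity subset of $\{w_\mu=0\}$, which can be seen from the Euler--Lagrange equation solved by $w_\mu$. Using this, one approximates $\mu$ by capacitary measures $\mu_{A_k}$ of quasi-open sets $A_k\subset\O^*$ in the $d_\gamma$ topology. Monotonicity of $\F$ then gives $\F(\O^*)\le\F(A_k)$, and a diagonal argument combined with the $\gamma$-lower semi-continuity of $\F$ along the joint convergence $\mu_{A_k}\to\mu\leftarrow\mu_{\O_n}$ yields $\F(\O^*)\le\liminf_n\F(\O_n)=\alpha$, as required.

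The main obstacle is precisely this last step: $d_\gamma$-compactness delivers only a capacitary measure, and the inequality $\F(\O^*)\le\alpha$ has to be bridged across this gap. The two technical cornerstones are therefore (i) the domination $\mu\ge\mu_{\O^*}$ and (ii) the $d_\gamma$-approximation of $\mu$ by capacitary measures of quasi-open subsets of $\O^*$; both are standard in the $\gamma$-convergence theory of capacitary measures but constitute the genuine content of the argument.
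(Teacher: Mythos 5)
The paper does not actually prove this theorem: it is quoted from \cite{bdm93} and used as a black box, so there is no in-paper proof to compare against. The closest in-paper relative is the proof of Theorem \ref{excontr}, which runs the same direct method you propose (minimizing sequence, $d_\gamma$-compactness of $\M_{\cp}(\Dr)$, candidate $\O^*=\{w_\mu>0\}$, volume bound via pointwise convergence and Fatou), but closes the final inequality $\F(\O^*)\le\F(\mu)$ by hand, using the explicit formula for the functional and the maximum principle $w_\mu\le w_{\O^*}$, rather than the abstract monotonicity/approximation machinery you invoke. Your argument up to and including the volume estimate is correct and is exactly this standard skeleton (one small remark: $L^1$ convergence of $w_{\O_n}$ gives a.e.\ convergence along a subsequence, which suffices; quasi-everywhere convergence is not needed there).

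The genuine gap is in your last step. From $\F(\O^*)\le\F(A_k)$ (monotonicity, since $A_k\subset\O^*$) together with the mere facts that $\mu_{A_k}\to\mu$ and $\mu_{\O_n}\to\mu$ in $d_\gamma$, you cannot conclude $\F(\O^*)\le\liminf_n\F(\O_n)$: lower semicontinuity bounds $\F$ from below along a given convergent sequence, but it never compares the liminfs of $\F$ along two \emph{different} sequences converging to the same measure, and nothing you have stated excludes $\liminf_n\F(\O_n)<\inf_k\F(A_k)$, in which case the inequality $\F(\O^*)\le\F(A_k)$ is useless. The inner approximation must therefore be tied to the specific minimizing sequence. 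The standard ways to do this are: (a) prove that $\O_n\cup\O^*\to\O^*$ in the $\gamma$-distance whenever $\mu_{\O_n}\to\mu$ and $\O^*=\{w_\mu>0\}$, and then chain $\F(\O^*)\le\liminf_n\F(\O_n\cup\O^*)\le\liminf_n\F(\O_n)$, using lower semicontinuity first and monotonicity second; or (b) introduce the relaxed functional $\overline{\F}(\mu)=\inf\{\liminf_n\F(B_n):\mu_{B_n}\to\mu\}$, observe $\overline{\F}(\mu)\le\liminf_n\F(\O_n)$ and $\overline{\F}(\mu_{\O^*})\ge\F(\O^*)$, and prove that $\overline{\F}$ is monotone on $\M_{\cp}(\Dr)$ so that $\mu\ge\mu_{\O^*}$ gives the conclusion; this monotonicity again reduces to a joint approximation lemma that enlarges a recovery sequence for $\mu$ into one for $\mu_{\O^*}$. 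Either route is the actual content of \cite{bdm93}; your items (i) and (ii) are genuine ingredients, but as assembled they do not yield the conclusion.
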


\bigskip\noindent{\bf Dirichlet energy of quasi-open sets and capacitary measures.} 
Suppose that $\Dr\subset\R^d$ is a quasi-open set and $f\in L^2(\Dr)$. For a quasi-open set of finite measure $\O\subset\Dr$ we define the Dirichlet energy of $\O$ with respect to $f$ as
$$E(\O,f)=\min\Big\{J(\O,u,f)\ :\ u\in H^1_0(\O)\Big\},$$
where the functional $J(\O,u,f)$ is defined in \eqref{dirfunct}. The minimizer $u$ solves \eqref{pdeufo}. Thus multiplying both sides of \eqref{pdeufo} by $u$ and integrating by parts, we get
$$E(\O,f)=-\frac12\int_\Dr fu\,dx.$$
For a capacitary measure $\mu\in \mathcal M_{\cp}(\Dr)$ we have
$$E(\mu,f)=\min\Big\{J(\mu,u,f)\ :\ u\in H^1_\mu\Big\},$$
where $J(\mu,u,f)$ is the functional in \eqref{jmu}, and again by integration by parts we get
$$E(\mu,f)=-\frac12\int_\Dr fu\,dx.$$

\begin{oss}
The functional $E(\O,f)$ is decreasing with respect to the set inclusion since we have
$$H^1_0(\O_1)\subset H^1_0(\O_2)\qquad\hbox{whenever}\qquad\O_1\subset\O_2.$$
Moreover, $E(\O,f)$ is of the form \eqref{fjmu} and so it is lower semi-continuous with respect to $d_\gamma$.  Thus, by Theorem \ref{teobudm}, there is a solution to the shape optimization problem
$$\min\Big\{E(\O,f)\ :\ \O\ \text{quasi-open},\ \O\subset \Dr,\ |\O|\le m\Big\}.$$
In order to have a solution $\O_{opt}$ which is an open set it is necessary to assume some higher intergability of $f$, while regularity results for $\partial\O_{opt}$ are available only if we assume that $f$ is H\"older continuous.
\end{oss}

\section{Optimal domains for worst-case energy functionals }\label{sexis}

We consider a fixed bounded domain $D\subset\R^d$ and a function  $f\in L^p(D)$, where $p\in[2d/(d+2),+\infty]$ ($p>1$ if $d=2$).
For every quasi-open set $\O\subset D$ we define the worst-case shape functional \be\label{wcfunct}
\E_{\delta,p}(\O)=\sup\Big\{E(\O,f+g)\ :\ \|g\|_{L^p(D)}\le\delta\Big\},
\ee
where $\delta>0$ is a given number.

\begin{prop}\label{infsup}
For every quasi open set $\O\subset\R^d$ of finite measure and $f\in L^p(\O)$, for $p$ as above, we have
\be\label{wcfunct2}
\E_{\delta,p}(\O)=\min\left\{\int_D\Big(\frac12|\nabla u|^2-fu\Big)\,dx+\delta\|u\|_{L^{p'}(D)}\ :\ u\in H^1_0(\O)\right\}.
\ee
The supremum in \eqref{wcfunct} is attained for the function 
$$g=-\delta u|u|^{(2-p)/(p-1)}\|u\|^{-p'/p}_{L^{p'}(D)}\;,$$ 
where $p'$ is the conjugate exponent of $p$ and $u$ is the minimizer of the right hand side of \eqref{wcfunct2}.
\end{prop}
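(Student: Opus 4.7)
The plan is to recognise the identity as a convex/concave minimax duality. Writing $\phi(u,g)=\int_D\bigl(\tfrac12|\nabla u|^2-(f+g)u\bigr)\,dx$, definition \eqref{dirfunct} gives $E(\O,f+g)=\inf_{u\in H^1_0(\O)}\phi(u,g)$, and hence $\E_{\delta,p}(\O)=\sup_{\|g\|_{L^p}\le\delta}\inf_u\phi(u,g)$. On the other hand, by the $L^p$--$L^{p'}$ duality, $\sup_{\|g\|_{L^p}\le\delta}\bigl(-\int_D gu\,dx\bigr)=\delta\|u\|_{L^{p'}(D)}$, so the right-hand side of \eqref{wcfunct2} equals $\inf_u\sup_g\phi(u,g)$. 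The inequality $\E_{\delta,p}(\O)\le\inf_u\sup_g\phi(u,g)$ is then just the standard $\sup\inf\le\inf\sup$, combined with the H\"older bound on $-\int_D gu\,dx$.

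For the reverse inequality I would exhibit an explicit saddle point $(u^*,g^*)$. Let $u^*$ be the unique minimizer in $H^1_0(\O)$ of
$J(u):=\int_D\bigl(\tfrac12|\nabla u|^2-fu\bigr)\,dx+\delta\|u\|_{L^{p'}(D)}$;
the direct method applies because $J$ is strictly convex, weakly lower semicontinuous, and coercive, the coercivity resting on the Sobolev embedding $H^1_0(\O)\hookrightarrow L^{p'}(D)$, which holds precisely when $p\ge 2d/(d+2)$. Since the G\^ateaux derivative of $\|\cdot\|_{L^{p'}}$ at nonzero $u^*$ is $u^*|u^*|^{p'-2}\|u^*\|_{L^{p'}}^{1-p'}$, the Euler--Lagrange equation of $J$ reads $-\Delta u^*=f+g^*$ in $\O$, with
$$g^*=-\delta\,u^*|u^*|^{p'-2}\|u^*\|_{L^{p'}}^{1-p'}=-\delta\,u^*|u^*|^{(2-p)/(p-1)}\|u^*\|_{L^{p'}}^{-p'/p},$$
which matches the expression in the statement, after rewriting $p'-2=(2-p)/(p-1)$ and $1-p'=-p'/p$. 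A direct computation then yields $\|g^*\|_{L^p(D)}=\delta$ and the H\"older equality $-\int_D g^*u^*\,dx=\delta\|u^*\|_{L^{p'}}$.

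These two facts close the saddle-point argument. By the Euler--Lagrange equation and strict convexity in $u$, the function $u^*$ is the unique minimizer of $\phi(\cdot,g^*)$; by the H\"older equality, $g^*$ realizes $\sup_{\|g\|_{L^p}\le\delta}\phi(u^*,g)$. Consequently
$$\E_{\delta,p}(\O)\ge\inf_u\phi(u,g^*)=\phi(u^*,g^*)=\sup_g\phi(u^*,g)\ge\inf_u\sup_g\phi(u,g),$$
which, together with the opposite bound from the first paragraph, yields \eqref{wcfunct2} and identifies $g^*$ as a maximizer in \eqref{wcfunct}. The degenerate case $u^*\equiv0$, where the formula for $g^*$ is ill-defined, forces both sides of \eqref{wcfunct2} to vanish and is handled separately (one may then take $g=-f$ on $\O$ extended by zero). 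The main point requiring care is the compatibility between the two characterizations of $g^*$ — as the source in the Euler--Lagrange equation of $J$ and as the equality-case vector in H\"older's inequality — which is precisely what makes the interchange of $\sup$ and $\inf$ valid and justifies the explicit formula.
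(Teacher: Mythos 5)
Your argument is correct, but it reaches the minimax identity by a genuinely different route than the paper. The paper first proves attainment of the supremum in \eqref{wcfunct} by weak compactness of the ball $\{\|g\|_{L^p}\le\delta\}$ together with the upper bound $E(\O,f+g_n)\le J(\O,u,f+g_n)$, and then justifies the interchange of $\sup$ and $\inf$ by appealing to a general sup--inf switching result (citing Clarke), after which the inner supremum in $g$ is computed by H\"older's inequality exactly as you do. You instead bypass any abstract minimax theorem: you solve the convex problem on the right of \eqref{wcfunct2} by the direct method (correctly identifying the Sobolev embedding $H^1_0(\O)\hookrightarrow L^{p'}$, i.e.\ $p\ge 2d/(d+2)$, as the source of coercivity and of the well-posedness of $\int fu$), read off $g^*$ from the Euler--Lagrange equation, and verify by hand the two saddle-point inequalities, which simultaneously proves the equality $\sup\inf=\inf\sup$, the formula for the maximizer, and attainment of the supremum --- so your proof is more self-contained and subsumes the paper's separate compactness step. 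Two small caveats: the degenerate case $u^*\equiv 0$, which you rightly isolate, still needs the observation that minimality of $0$ forces $\big|\int_\O fu\,dx\big|\le\delta\|u\|_{L^{p'}}$ for all $u\in H^1_0(\O)$ (so that both sides of \eqref{wcfunct2} vanish; your choice $g=-f\ind_\O$ is admissible only under that bound); and for $p=\infty$ (hence $p'=1$) the norm is not G\^ateaux differentiable where $u^*$ vanishes, so the Euler--Lagrange step should be phrased via the subdifferential --- the paper likewise treats $p=\infty$ only by a closing remark. Neither point affects the substance of your argument.
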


\begin{proof}
We first show that the supremum in \eqref{wcfunct} is attained. Let $g_n$ be a maximizing sequence in \eqref{wcfunct}; since $\|g_n\|_{L^p(D)}\le\delta$ we may assume, up to extracting a subsequence, that $g_n$ converges weakly to some $g$ in $L^p(D)$. The solutions $u_n$ of
$$-\Delta u_n=f+g_n\quad\hbox{in }\O,\qquad u_n\in H^1_0(\O)$$
then converge, weakly in $H^1_0(\O)$, to the solution $u$ of
$$-\Delta u=f+g\quad\hbox{in }\O,\qquad u\in H^1_0(\O).$$
Therefore,
$$\limsup_{n\to\infty}E(\O,f+g_n)\le\lim_{n\to\infty}J(\O,u,f+g_n)=J(\O,u,f+g)=E(\O,f+g).$$
Since $g_n$ is a maximizing sequence, we get the claim. 

The expression of $\E_{\delta,p}$ is of $\sup$-$\inf$ type and to the functional $J$ the $\sup$-$\inf$ switch can be applied (see for instance \cite{cla77}). The supremum of $J(\O,u,f+g)$ with respect to $g$ is easy to compute and is reached for
$$g=-\delta u|u|^{(2-p)/(p-1)}\|u\|^{-p'/p}_{L^{p'}(D)},$$
where $p'$ is the conjugate exponent of $p$; we have then
$$\E_{\delta,p}(\O)=\inf\left\{\int_D\Big(\frac12|\nabla u|^2-fu\Big)\,dx+\delta\|u\|_{L^{p'}(D)}\ :\ u\in H^1_0(\O)\right\}.$$
Note that, when $p=\infty$, the functional $\E_{\delta,\infty}$ reduces to
$$\E_{\delta,\infty}(\O)=\inf\left\{\int_D\Big(\frac12|\nabla u|^2-fu+\delta|u|\Big)\,dx\ :\ u\in H^1_0(\O)\right\}$$
and the analysis above still holds.
\end{proof}

\begin{teo}\label{exth}
The worst case shape optimization problem 
\be\label{wcshopt}
\min\Big\{\E_{\delta,p}(\O)\ :\ \O\subset\Dr,\ \O\ \text{quasi-open},\ |\O|\le m\Big\}.
\ee
admits a solution.
\end{teo}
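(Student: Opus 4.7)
The plan is to apply Theorem~\ref{teobudm} to the worst-case functional $\F=\E_{\delta,p}$. Two properties need to be verified: (i) monotonicity with respect to set inclusion, and (ii) lower semi-continuity with respect to the $\gamma$-distance. Monotonicity is immediate: for every $g$ with $\|g\|_{L^p(D)}\le\delta$, the map $\O\mapsto E(\O,f+g)$ is decreasing, since $\O_1\subset\O_2$ implies $H^1_0(\O_1)\subset H^1_0(\O_2)$ and the Dirichlet energy is defined as a minimum over this space. The pointwise supremum of a family of decreasing functionals is still decreasing, giving (i).

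For (ii), I use the representation $\E_{\delta,p}(\O)=\sup_{\|g\|_{L^p(D)}\le\delta}E(\O,f+g)$. For each fixed admissible $g$, the slice $\O\mapsto E(\O,f+g)$ has the form \eqref{fjmu} with integrand $j(x,u)=-\tfrac12(f+g)(x)u$. When $f+g\in L^2(D)$ (which is automatic for $p\ge 2$), Young's inequality yields $j(x,s)\ge -\tfrac18 s^2 - 2|(f+g)(x)|^2$, so the lower-semi-continuity criterion for functionals of the form \eqref{fjmu} recalled in the excerpt applies, and each slice $E(\cdot,f+g)$ is $d_\gamma$-lower semi-continuous. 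The pointwise supremum of a family of lower semi-continuous functions is lower semi-continuous, hence $\E_{\delta,p}$ enjoys (ii), and Theorem~\ref{teobudm} yields a solution to \eqref{wcshopt}.

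The main technical point I expect is the range $p\in[2d/(d+2),2)$ permitted by the hypotheses (when $d\ge 3$), in which $f+g$ is guaranteed only to lie in $L^p(D)\subset H^{-1}(D)$ via Sobolev duality, and the quoted lower-semi-continuity statement cannot be invoked verbatim. I would handle this case by working instead with the dual representation from Proposition~\ref{infsup},
$$\E_{\delta,p}(\O)=\min\left\{\int_D\Big(\tfrac12|\nabla u|^2-fu\Big)\,dx+\delta\|u\|_{L^{p'}(D)}\ :\ u\in H^1_0(\O)\right\},$$
extending it to capacitary measures and establishing its $d_\gamma$-lower semi-continuity directly: given $\O_n\to\mu_\infty$ in $d_\gamma$, a minimizer $u_\infty$ associated with $\mu_\infty$ is approximated by a recovery sequence $u_n\in H^1_0(\O_n)$ built from the resolvents, using the (compact) Sobolev embedding $H^1_0(D)\hookrightarrow L^{p'}(D)$, valid since $p'\le 2^*$ under the hypothesis on $p$, to pass the $L^{p'}$-term to the limit, while the Dirichlet and linear parts are controlled by the convergence of resolvents in the $\gamma$-topology.
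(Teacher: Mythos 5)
Your proposal is correct and reaches the same conclusion by the same skeleton (verify the two hypotheses of Theorem \ref{teobudm}), but the verification of lower semicontinuity is genuinely different from the paper's. The paper picks, for each $\O_n$ in a $\gamma$-converging sequence, the optimal perturbation $g_n$ realizing the supremum in \eqref{wcfunct}, extracts a weak $L^p$ limit $g$, and passes to the limit in the state equation with simultaneously varying domain and right-hand side; you instead freeze $g$, observe that each slice $\O\mapsto E(\O,f+g)$ is $d_\gamma$-lower semicontinuous, and use that a pointwise supremum of lower semicontinuous functionals is lower semicontinuous. Your route is more elementary and, in fact, more robust: the weak limit $g$ of the maximizers need not maximize on the limit set, so the paper's chain naturally ends with $E(\O_\infty,f+g)$, which is a priori only $\le\E_{\delta,p}(\O_\infty)$; the clean way to close that gap is precisely your slice-wise argument (or, equivalently, testing $\E_{\delta,p}(\O_n)$ against the maximizer of the limit set). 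Your flag on the subcritical range $p\in[2d/(d+2),2)$, $d\ge3$, is also well taken, since the quoted criterion for functionals of the form \eqref{fjmu} assumes an $L^2$ right-hand side; the repair can be made even lighter than your recovery-sequence construction from Proposition \ref{infsup}: writing $E(\O,f+g)=-\frac12\langle f+g,u\rangle_{H^{-1},H^1_0}$ and using that $\gamma$-convergence yields weak $H^1_0(D)$ convergence of the states for any $H^{-1}$ datum shows each slice is in fact continuous, after which the supremum argument applies verbatim. Either way your argument is complete and valid.
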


\begin{proof}
We will prove that the functional $\E_{\delta,p}$ satisfies the hypotheses of Theorem \ref{teobudm}. By using the inclusion of Sobolev spaces $H^1_0(\O_1)\subset H^1_0(\O_2)$ whenever $\O_1\subset\O_2$ and the expression \eqref{wcfunct2} we obtain that the mapping $\O\mapsto\E_{\delta,p}(\O)$ is decreasing with respect to the set inclusion. Thus it is sufficient to prove that the shape functional $\E_{\delta,p}$ is lower semicontinuous with respect to the $\gamma$-convergence. Suppose that $\O_n$ is a sequence of quasi-open sets in $\Dr$ $\gamma$-converging to the quasi-open set $\O_\infty\subset\Dr$. For every open set $\O_n$ let $g_n\in L^p(\Dr)$ be the function such that 
$$\E_{\delta,p}(\O_n)=E(\O_n,f,g_n)\qquad\text{and}\qquad \|g_n\|_{L^p}=\delta.$$
Up to a subsequence, we may suppose that $g_n$ converges weakly in $L^p(\Dr)$ to a function $g\in L^p(\Dr)$ which is such that $\|g\|_{L^p}\le \delta$. By the $\gamma$-convergence of $\O_n$ we get that the solution $u_n$ of the equation 
$$-\Delta u_n=f+g_n\quad\text{in}\quad\O_n,\qquad u_n\in H^1_0(\O_n),$$
converges strongly in $H^1_0(\Dr)$ to the solution of 
$$-\Delta u=f+g\quad\text{in }\O_\infty,\qquad u\in H^1_0(\O_\infty).$$
Thus, we have that 
\begin{align*}
\liminf_{n\to\infty}\E_{\delta,p}(\O_n)&=\liminf_{n\to\infty}E(\O_n,f+g_n)=\liminf_{n\to\infty}-\frac12\int_\Dr (f+g_n)u_n\,dx\\
&=-\frac12\int_\Dr (f+g)u\,dx=E(\O,f+g)\ge\E_{\delta,p}(\O),
\end{align*}
which concludes the proof.
\end{proof}

\subsection{The radial case}\label{sradial}

In this section we consider the case of a right-hand side $f$ of radial type; more precisely, we assume that $f=f(|x|)$ and that the function $r\mapsto f(r)$ is decreasing. We then consider the worst-case shape optimization problem \eqref{wcshopt} where the cost functional $\E_{\delta,p}$ is given, according to Proposition \ref{infsup}, by
$$\E_{\delta,p}(\O)=\inf\left\{\int_D\Big(\frac12|\nabla u|^2-f(|x|)u\Big)\,dx+\delta\|u\|_{L^{p'}(D)}\ :\ u\in H^1_0(\O)\right\}.$$
We also assume that the domain $D$ is large enough to contain a ball of measure $m$. With the assumptions above we have the following result.

\begin{prop}\label{trad}
The worst case shape optimization problem \eqref{wcshopt} is solved by the ball of measure $m$ centered at the origin.
\end{prop}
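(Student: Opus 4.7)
The plan is to apply Schwarz symmetrization directly to the minimizer in the variational formulation of $\E_{\delta,p}$ given by Proposition \ref{infsup}. Let $\O\subset D$ be admissible with $|\O|\le m$, and let $B^*$ denote the ball of measure $m$ centered at the origin, which lies in $D$ by the size hypothesis. The goal is to show $\E_{\delta,p}(B^*)\le\E_{\delta,p}(\O)$.

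First I would observe that the functional
$$J(u)=\int_D\Big(\frac12|\nabla u|^2-f(|x|)u\Big)\,dx+\delta\|u\|_{L^{p'}(D)}$$
is convex, coercive and weakly lower semicontinuous on $H^1_0(\O)$, hence attains its infimum at some $u\in H^1_0(\O)$ with $\E_{\delta,p}(\O)=J(u)$. Assuming $f\ge 0$ (the natural companion hypothesis to radial monotonicity in symmetrization arguments), I would replace $u$ by $|u|$ without increasing $J$, since $\bigl|\nabla|u|\bigr|=|\nabla u|$ a.e., the $L^{p'}$ norm is preserved, and $\int f|u|\ge\int fu$. Thus one may assume $u\ge 0$.

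I would then let $u^*$ be the symmetric decreasing rearrangement of $u$ and combine three classical rearrangement facts to obtain $J(u^*)\le J(u)$: P\'olya--Szeg\H{o} gives $\int_D|\nabla u^*|^2\le\int_D|\nabla u|^2$; the Hardy--Littlewood inequality combined with $f=f^*$ (from radial monotonicity) gives $\int_D f(|x|)u^*\,dx\ge\int_D f(|x|)u\,dx$; and equimeasurability preserves the $L^{p'}$ norm. Since $|\{u^*>0\}|=|\{u>0\}|\le|\O|\le m$ and $\{u^*>0\}$ is a ball centered at the origin, one has $u^*\in H^1_0(B^*)$, and therefore
$$\E_{\delta,p}(B^*)\le J(u^*)\le J(u)=\E_{\delta,p}(\O).$$

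The only subtle step I foresee is the preliminary reduction to $u\ge 0$; the symmetrization step itself is completely standard. The virtue of starting from the $\inf$ reformulation of Proposition \ref{infsup} is that one never needs to symmetrize the worst perturbation $g$ itself, which depends nonlinearly on the minimizer and would be awkward to rearrange directly in the original sup-inf formulation.
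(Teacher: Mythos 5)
Your proof is correct and follows essentially the same route as the paper: both pass to the $\inf$ reformulation of Proposition \ref{infsup} and symmetrize the minimizer, using P\'olya--Szeg\H{o} for the gradient term, equimeasurability for the $L^{p'}$ norm, and the Hardy--Littlewood/Riesz inequality together with the radial monotonicity of $f$ for the linear term. Your explicit reduction to $u\ge0$ (and the accompanying observation that a sign condition on $f$ is implicitly needed) is a point the paper's terse proof glosses over, but it does not change the argument.
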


\begin{proof}
The proof is obtained by symmetrization. It is known that the rearrangement of $\O$ into the ball $\O^*$ centered at the origin and $u$ into a radial decreasing function $u^*$ gives a lower Dirichlet integral
$$\int_{\O^*}|\nabla u^*|^2\,dx\le\int_\O|\nabla u|^2\,dx,$$
and the same $L^{p'}$ norm
$$\|u^*\|_{L^{p'}(\O^*)}=\|u\|_{L^{p'}(\O)},$$
while, due to the monotonicity assumption on $f$ and the Riesz inequality (see \cite{liebloss}), the linear term $\int_\O fu\,dx$ increases :
$$\int_{\O^*}f(|x|)u^*(x)\,dx\ge\int_\O f(|x|)u(x)\,dx.$$
Therefore, we obtain that the ball $\O^\ast$ satisfies 
$$|\O^*|=|\O|=m\qquad\text{and}\qquad\E_{\delta,p}(\O^*)\le\E_{\delta,p}(\O),$$
which concludes the proof.
\end{proof}

\section{Optimal domains for linear worst-case functionals}\label{scontrol}

In this section we consider a more general form of the optimization problem of the previous sections, presented in the form of an optimal control problem, where the control variable is the domain $\O\subset D$ in the bounded open set $\Dr\subset\R^d$. 

\bigskip\noindent{\bf The worst-case cost functional.} For a quasi-open set $\O\subset\Dr$ we consider the state equation
$$-\Delta u=f\quad\hbox{in }\O,\qquad u\in H^1_0(\O),$$
where $f\in L^p(\Dr)$ is a given fixed non-negative function and $p\ge2d/(d+2)$ ($p>1$, if $d=2$). The cost functional is then
$$F(\O,f)=\int_D j(x,u)\,dx,$$
where 
$$j(x,u)=-h(x)u\quad\hbox{with}\quad h\in L^{q}(\Dr)\quad \hbox{and}\quad h(x)\ge0,$$
where $q\ge2d/(d+2)$ ($q>1$, if $d=2$). Note that, when $h=f$ we are in the situation of Section~\ref{sexis}. We assume that the right-hand side $f$ may vary under an unknown perturbation, i.e. we consider the worst-case shape functional
$$\F_{\delta,p}(\O)=\sup_{\|g\|_{L^p}\le\delta}\inf\left\{-\int_\O h(x)u\,dx\ :\ -\Delta u=f+g \ \hbox{ in }\ \O,\ u\in H^1_0(\O)\right\}.$$

\begin{lemma}
Let $\O\subset\R^d$ be a quasi-open set of finite measure, $f\in L^p(\O)$ and  $h\in L^q(\O)$, where $p$ and $q$ are as above. Then
\be\label{mathcalFexpression}
\F_{\delta,p}(\O)=-\int_\O fw\,dx+\delta\|w\|_{L^{p'}(\O)},
\ee
where $w$ is the solution of
$$-\Delta w=h\quad\text{in }\O,\qquad w\in H^1_0(\O).$$
\end{lemma}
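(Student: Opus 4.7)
The plan is to recognize that the inner ``$\inf$'' in the definition of $\F_{\delta,p}(\O)$ is not really a minimization: for each admissible perturbation $g$, the PDE $-\Delta u = f+g$ in $\O$ with $u\in H^1_0(\O)$ has a unique solution $u_g$, so the inner expression collapses to $-\int_\O h\, u_g\,dx$. The whole game is then to extract the $g$-dependence of this linear functional in a way that makes the supremum over the $L^p$-ball trivial to compute.

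The natural device is the adjoint state $w$ defined by $-\Delta w = h$ in $\O$, $w\in H^1_0(\O)$. Since $h\in L^q$ with $q$ in the admissible range, $w$ is a well-defined element of $H^1_0(\O)$. First I would use $w$ as a test function in the weak formulation of the state equation, and $u_g$ as a test function in the adjoint equation, to obtain the identity
$$\int_\O h\, u_g\,dx=\int_\O \nabla w\cdot\nabla u_g\,dx=\int_\O (f+g)\, w\,dx.$$
This rewrites the cost as
$$-\int_\O h\, u_g\,dx=-\int_\O f w\,dx-\int_\O g w\,dx,$$
and crucially isolates $g$ in a single linear term against the fixed function $w$, independent of $\O$-through-$u_g$.

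Next I would take the supremum over $g$ with $\|g\|_{L^p(D)}\le\delta$. Since $\int_\O fw\,dx$ does not depend on $g$, this reduces to computing
$$\sup_{\|g\|_{L^p}\le\delta}\Bigl(-\int_\O g w\,dx\Bigr)=\delta\|w\|_{L^{p'}(\O)},$$
by the standard $L^p$--$L^{p'}$ duality (the supremum being attained, in the reflexive range, by $g=-\delta\, w|w|^{p'-2}\|w\|_{L^{p'}}^{1-p'}$, extended by zero outside $\O$; for $p=\infty$ we use $g=-\delta\,\mathrm{sgn}(w)$ in the usual way, and for $p=1$ an approximation argument, though these endpoints are either excluded or handled as in Proposition \ref{infsup}). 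Combining these two computations yields the claimed formula \eqref{mathcalFexpression}.

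There is no serious obstacle here: the only point requiring a bit of care is the justification of using $w$ and $u_g$ as mutual test functions on a quasi-open set, which is legitimate because both belong to $H^1_0(\O)$ and the weak formulation holds for every test function in that space. The integrability assumptions on $f,g,h$ (ensuring that the relevant integrals and solutions are well-defined via the Sobolev embedding) are exactly the hypotheses built into the statement.
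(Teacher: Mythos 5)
Your proof is correct and follows essentially the same route as the paper's: introduce the adjoint state $w$ solving $-\Delta w=h$, integrate by parts twice to isolate the $g$-dependence as the linear term $-\int_\O gw\,dx$, and conclude by $L^p$--$L^{p'}$ duality, with the same explicit maximizer (your exponents $p'-2$ and $1-p'$ coincide with the paper's $(2-p)/(p-1)$ and $-p'/p$).
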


\begin{proof}
Let $g\in L^p(\O)$ be such that $\|g\|_{L^q}\le \delta$ and $u$ be the solution of
$$-\Delta u=f+g\quad\text{in }\O,\qquad u\in H^1_0(\O).$$
Then after integrating by parts we have
$$-\int_\O hu\,dx=\int_\O\nabla u\cdot\nabla w\,dx=-\int_\O w(f+g)\,dx\le -\int_\O wf\,dx+\delta \|w\|_{L^{p'}(\O)},$$
with an equality achieved for
$$g=-\delta w|w|^{(2-p)/(p-1)}\|w\|_{L^{p'}}^{-p'/p}\;,$$
which concludes the proof.
\end{proof}

\noindent{\bf The shape optimization problem.} We now consider the shape optimization problem 
\be\label{wcsopmain}
\min\Big\{\F_{\delta,p}(\O)\ :\ \O\subset\Dr,\ \O\ \text{quasi-open},\ |\O|\le m\Big\},
\ee
where $m>0$ and $\F_{\delta,p}(\O)$ is the functional from \eqref{mathcalFexpression}. We notice that $\F_{\delta,p}$ is not necessarily decreasing and so, we cannot apply Theorem \ref{teobudm} in order to obtain that an optimal domain exists.

\begin{teo}\label{excontr}
Let $\Dr\subset\R^d$ be a bounded open set, $h\in L^d(\Dr)$ and $f\in L^p(\Dr)$ be two given functions, where $p\ge2d/(d+2)$ ($p>1$, if $d=2$). Suppose that  $h\ge 0$ and $f\ge \eps>0$ on $\Dr$. Then, there is a constant $\bar\delta>0$ such that for every $0<\delta\le\bar\delta$, there exists a solution to the problem \eqref{wcsopmain}.
\end{teo}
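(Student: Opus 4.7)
The plan is to proceed by relaxation to the compact space $\M_{\cp}(\Dr)$ of capacitary measures and then to show that, for $\delta$ small, the relaxed minimizer is in fact a quasi-open set.

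\emph{Step 1 (relaxation).} The natural extension of $\F_{\delta,p}$ to $\mu\in\M_{\cp}(\Dr)$ is
$$\widetilde{\F}_{\delta,p}(\mu)=-\int_{\Dr} f\,w^h_\mu\,dx+\delta\|w^h_\mu\|_{L^{p'}(\Dr)},$$
where $w^h_\mu\in H^1_\mu$ solves $-\Delta w+\mu w=h$; this extension is continuous on $(\M_{\cp}(\Dr),d_\gamma)$ by the continuity of the resolvent recalled in Section~2. Setting $\Omega(\mu):=\{w^1_\mu>0\}$, with $w^1_\mu$ the resolvent at the constant $1$, the constraint $|\Omega(\mu)|\leq m$ is $d_\gamma$-closed, as Fatou's lemma applied to the a.e.\ limit of $w^1_{\mu_n}$ along $d_\gamma$-convergent sequences shows. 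Compactness of $\M_{\cp}(\Dr)$ then yields a minimizer $\mu^*$ for the relaxed problem.

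\emph{Step 2 (key comparison).} Set $\Omega^*:=\Omega(\mu^*)$; then $\Omega^*$ is admissible, $\mu_{\Omega^*}\leq\mu^*$, and by the maximum principle $v:=w^h_{\Omega^*}-w^h_{\mu^*}\geq 0$. It suffices to prove
$$\F_{\delta,p}(\Omega^*)\leq\widetilde{\F}_{\delta,p}(\mu^*),$$
since the reverse inequality follows from the minimality of $\mu^*$ ($\mu_{\Omega^*}$ being admissible in the relaxed problem), and equality makes $\Omega^*$ a solution of the original problem. For the difference of the two sides, the hypothesis $f\geq\eps$ bounds the linear part by $-\eps\|v\|_{L^1}$, while the fundamental theorem of calculus applied to $t\mapsto\|w^h_{\mu^*}+tv\|_{L^{p'}}$, combined with the $L^\infty$-bound $w^h_{\Omega^*}\leq M:=C\|h\|_{L^d(\Dr)}$ coming from the Stampacchia estimate (this is where the hypothesis $h\in L^d$ enters), bounds the perturbation term by $\delta M^{p'-1}\|v\|_{L^1}/\|w^h_{\mu^*}\|_{L^{p'}}^{p'-1}$. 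Altogether,
$$\F_{\delta,p}(\Omega^*)-\widetilde{\F}_{\delta,p}(\mu^*)\leq \|v\|_{L^1}\Bigl(-\eps+\frac{\delta\,M^{p'-1}}{\|w^h_{\mu^*}\|_{L^{p'}}^{p'-1}}\Bigr).$$

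\emph{Step 3 (closing the estimate).} The main obstacle is a \emph{uniform} lower bound $\|w^h_{\mu^*}\|_{L^{p'}}\geq c>0$ independent of $\delta$, without which the bracket above has no definite sign. I would obtain it by comparing $\mu^*$ to any fixed admissible $\Omega_0\subset\Dr$ with $\int fw^h_{\Omega_0}\,dx>0$ (the case $h\equiv0$ is trivial): the minimality of $\mu^*$ gives $\widetilde{\F}_{\delta,p}(\mu^*)\leq\F_{\delta,p}(\Omega_0)\leq-\tfrac12\int fw^h_{\Omega_0}\,dx<0$ for $\delta$ below some $\delta_1>0$, and rearranging this together with H\"older's inequality yields $\|w^h_{\mu^*}\|_{L^{p'}}\geq c_1/\|f\|_{L^p(\Dr)}=:c>0$. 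Choosing $\bar\delta:=\min\{\delta_1,\,\eps c^{p'-1}/M^{p'-1}\}$ renders the bracket in Step~2 non-positive for every $0<\delta\leq\bar\delta$, so $\Omega^*$ solves the original problem. The case $p=\infty$ is simpler: the identity $\|w^h_{\Omega^*}\|_{L^1}-\|w^h_{\mu^*}\|_{L^1}=\int v\,dx$ replaces the $L^{p'}$ estimate and yields $\bar\delta=\eps$ directly.
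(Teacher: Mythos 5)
Your proposal is correct and follows essentially the same route as the paper: relaxation to the compact space $\M_{\cp}(\Dr)$, passage to the quasi-open set $\{w_\mu>0\}$, the maximum-principle comparison $w_\mu\le w_\O$, the bound of the norm increment by $\delta\,M^{p'-1}\|w_\mu\|_{p'}^{1-p'}\|v\|_{L^1}$, and the uniform lower bound on $\|w_\mu\|_{p'}$ extracted from the strict negativity of the infimum. The only differences are cosmetic (direct minimization of the relaxed functional instead of a minimizing sequence, the fundamental theorem of calculus instead of the convexity/concavity inequalities, and the Stampacchia constant in place of $\|w_\Dr\|_\infty$), plus your explicit treatment of the degenerate case $h\equiv0$, which the paper leaves implicit.
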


\begin{proof}
Let $\O_n$ be a minimizing sequence for \eqref{wcsopmain}. By $w_n$ we denote the solution of 
$$-\Delta w_n=h\quad\text{in }\O_n,\qquad w_n\in H^1_0(\O_n).$$
We may assume that $\O_n$ converges in the $d_\gamma$ distance to the capacitary measure $\mu\in\M_{\cp}(\Dr)$. By the properties of the $\gamma$-convergence we have that  $w_n$ converges strongly in $L^2(\Dr)$ and weakly in $H^1_0(\Dr)$ to the function $w_\mu$, solution of 
$$-\Delta w_\mu+\mu w_\mu=h,\qquad w_\mu\in H^1_\mu,$$
and so, setting 
$$\F_{\delta,p}(\mu):=-\int_\Dr fw_\mu\,dx+\delta\|w_\mu\|_{p'},$$ 
we get 
$$\F_{\delta,p}(\mu)\le\liminf_{n\to\infty}\F_{\delta,p}(\O_n).$$
We now consider the quasi-open set $\O=\{w_\mu>0\}$. By the pointwise almost everywhere convergence of $w_n$ to $w_\mu$ we have that 
$$|\O|\le\liminf_{n\to+\infty}|\O_n|\le m.$$
Thus, in order to prove that the set $\O$ is a solution of \eqref{wcsopmain}, it is sufficient to prove that $\F_{\delta,p}(\O)\le\F_{\delta,p}(\mu)$. We notice that by the maximum principle we have $w_\mu\le w_\O$, so that
\begin{align*}
\F_{\delta,p}(\O)-\F_{\delta,p}(\mu)&=\int_\Dr-fw_\O\,dx+\delta\|w_\O\|_{p'}-\left(-\int_\Dr fw_\mu\,dx+\delta\|w_\mu\|_{p'}\right)\\
&=\int_\Dr-f(w_\O-w_\mu)\,dx+\delta\big(\|w_\O\|_{p'}-\|w_\mu\|_{p'}\big)\\
&\le\int_\Dr-f(w_\O-w_\mu)\,dx+\frac{\delta}{p'}\|w_\mu\|^{-p'/p}_{p'}\left(\int_\Dr w_\O^{p'}\,dx-\int_\Dr w_\mu^{p'}\,dx\right)\\
&\le\int_\Dr-f(w_\O-w_\mu)\,dx+\frac{\delta}{p'}\|w_\mu\|^{-p'/p}_{p'}\int_\Dr p'w_\O^{p'-1}(w_\O-w_\mu)\,dx\\
&\le\int_\Dr-f(w_\O-w_\mu)\,dx+\delta\int_\Dr\frac{C_h^{p'-1}}{\|w_\mu\|_{p'}^{p'-1}}(w_\O-w_\mu)\,dx,
\end{align*}
where in the last line we set $C_h=\|w_\Dr\|_\infty$, being $w_\Dr$ is the solution of 
$$-\Delta w_\Dr=h\quad\text{in }\Dr,\qquad w_\Dr\in H^1_0(\Dr),$$
and we used that by the maximum principle we have $w_\O\le w_\Dr$ and that $w_\Dr$ is bounded. Thus we have 
\be\label{6875786}
\F_{\delta,p}(\O)-\F_{\delta,p}(\mu)\le\int_\Dr\bigg(\frac{\delta C_h^{p'-1}}{\|w_\mu\|_{p'}^{p'-1}}-f\bigg)(w_\O-w_\mu)\,dx.
\ee
Our next step is to prove that the right-hand side of \eqref{6875786} is negative for $\delta$ small enough. Let $\Dr_m\subset\Dr$ be a fixed quasi-open set with $|\Dr_m|=m$ and take $\delta$ small enough, depending on $\Dr_m$, $h$, $f$, and $p$, to have $\F_{\delta,p}(\Dr_m)<0$; taking into account the definition of $\F_{\delta,p}$ in \eqref{mathcalFexpression} this is always possible. Thus for $\delta$ small enough, there is a constant $\bar c>0$ depending only on $p$, $h$, $f$ and $\Dr$ such that
$$-\bar c\ge\inf\Big\{\F_{\delta,p}(\O)\ :\ \O\subset\Dr,\ \O\ \text{quasi-open},\ |\O|\le m\Big\}.$$
In particular, we have $\F_{\delta,p}(\mu)\le-\bar c<0$. Then
$$\F_{\delta,p}(\mu)=-\int_\Dr fw_\mu\,dx+\delta\|w_\mu\|_{p'}\ge\big(\delta-\|f\|_p\big)\|w_\mu\|_{p'},$$
and we can estimate from below the norm of $w_\mu$ as follows:
$$\|w_\mu\|_{p'}\ge\frac{-\F_{\delta,p}(\mu)}{\|f\|_p-\delta}\ge\frac{\bar c}{\|f\|_p-\delta}.$$
Substituting in \eqref{6875786} we get
$$\F_{\delta,p}(\O)-\F_{\delta,p}(\mu)\le\int_\Dr\left(\delta\Big(\frac{C_h(\|f\|_p-\delta)}{\bar c}\Big)^{p'-1}-f\right)(w_\O-w_\mu)\,dx.$$
Since $f$ is bounded from below we obtain that, choosing $\bar\delta>0$ small enough, depending on $\Dr$, $f$, $h$, $m$ and $p$, for $\delta\le\bar\delta$ we have $\F_{\delta,p}(\O)-\F_{\delta,p}(\mu)\le0$, which proves the claim. 
\end{proof}

\section{A numerical example}\label{snumer}

This section is devoted to some numerical simulations of optimal solutions for the worst-case functional. We focus on the worst-case functional 
$\E_{\delta,p}$ given in (\ref{wcfunct2}) for the case $p=2$. In order to set a numerical algorithm for simulating the optimal shapes we work with the optimal control formulation, as in the previous section. Obtaining the Euler-Lagrange equation of the worst-case functional $\E_{\delta,2}$ it is elementary to check that the problem 
$$\min\Big\{\E_{\delta,2}(\O)\ :\ \O\subset D,\ |\O|\le m\Big\}$$
is equivalent to
$$\min\Big\{F(\O)\ :\ \O\subset D,\ |\O|\le m\Big\},$$
where 
$$F(\O)=\int_\O\Big[-f(x)u+\delta\|u\|_2\Big]\,dx,$$
and $u$ the solution of the state equation
$$-\Delta u=f-\delta\frac{u}{\|u\|_2}\quad\text{in }\O,\qquad u\in H^1_0(\O).$$
Remark that both, the cost functional and the state equation, are now nonlinear. Of course, when $\delta =0$ we recover the original shape optimization problem for compliance. For the numerical simulations of optimal solutions we work with the generalized (relaxed) formulation of the problem (see \cite{bdm91}), or more concretely with an approximation of it introduced in \cite[Remark 5.8]{bgrv14},
$$\min\Big\{ \tilde F(V)\ :\ V\in{\cal B}(D),\ \int_D e^{-\alpha V}\,dx\le m \Big\},$$
where $\tilde F$ stands for the functional
$$\tilde F(V)=\int_D \Big[-f(x)u+\delta \|u\|_2\Big]\,dx,$$
being $u$ the solution of the PDE
$$-\Delta u+V(x)u=f-\delta \frac{u}{\|u\|_2}\quad\text{in }\Dr,\qquad u\in H^1_0(D)$$
and ${\cal B}(\O)$ the class of nonnegative Borel measurable functions on $\O$. The constraint 
$$\int_D e^{-\alpha V}\,dx\le m$$
plays the role of the volume constraint in the original problem. In \cite[Remark 5.8]{bgrv14} it is shown that when $\alpha $ goes to zero this problem $\Gamma$-converges to the relaxed formulation of the problem given in \cite{bdm91}. In general, if an optimal shape $\O$ exists, then there is an optimal potential $V$ such that, a.e. $x\in \O$,
$$V(x)=\begin{cases}
0&\hbox{if }x\in\O\\
+\infty&\hbox{if }x\in D\setminus\O\;.
\end{cases}$$

Our simulations are performed in FreeFEM++ using the Method of Moving Asymptotes as the optimizing routing (available in FreeFEM++ through the NLopt library). Method of Moving Asymptotes is a gradient based method widely used for Topology and Structural Optimization problems \cite{svanberg}. The nonlinear state and adjoint equations are solved with a simple iterative algorithm in which the nonlinear term is updated with the state of the previous iteration. In our numerical examples we take $D=(0,1)\times (0,1)$ and a regular mesh of $50\times 50$ elements. For the numerical practice it is advisable to constraint $V$ to take vales on a bounded interval $[0,M]$, and we select $M=1000$ (when $V$ takes this maximal value on the state $u$ is very small and practically vanishes). The election of $\alpha$ is a delicate issue if we want to obtain optimal potentials $V$ in which we clearly identify the optimal shape $\O=\{V=0\}$, since it depends on the value $M$ and the number of mesh elements. In our examples we have checked the right $\alpha$ to be $\alpha=0.01$. The source term $f(x,y)$ is the piecewise constant function:
$$f(x,y)=\begin{cases}
1&\hbox{if }x\le0.5\\
2&\hbox{if }x>0.5\;.
\end{cases}$$
The volume fraction is $m=0.3$, and it is saturated in all the simulations we show. In Figure \ref{0} we show the results for the unperturbed case, i.e. $\delta=0$. In this case optimal compliance is $-0.0297465$, and in the picture we clearly identify the optimal shape $\{V=0\}$ located on the right side of the square, just where the source term $f$ is bigger. Out of the optimal shape, the potential is already big enough so that the optimal state practically vanishes, as we can see in the pictures. In Figure \ref{0.25} we show the results for the perturbed case with $\delta =0.25$. In this case, optimal compliance is $-0.0188295$, greater that the unperturbed optimal compliance, and the optimal shape is very similar to the unperturbed one, but a bit more rounded as intuitively expected. 

\begin{figure}[htp]
\begin{center}
\subfigure[Optimal potential]{\includegraphics[height=5cm]{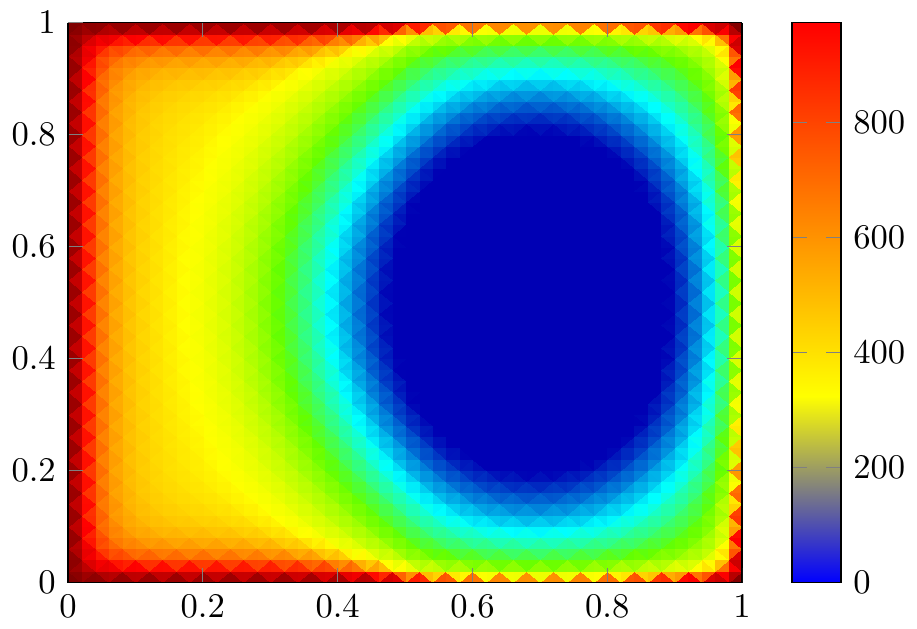}}
\subfigure[3d view optimal potential]{\includegraphics[height=5cm]{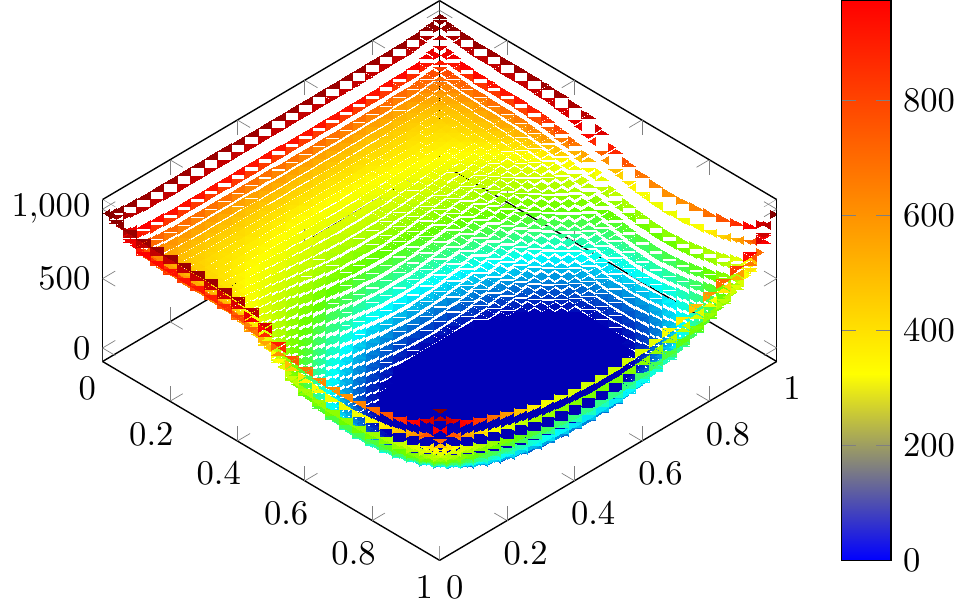}}\\
\subfigure[Optimal state]{\includegraphics[height=5cm]{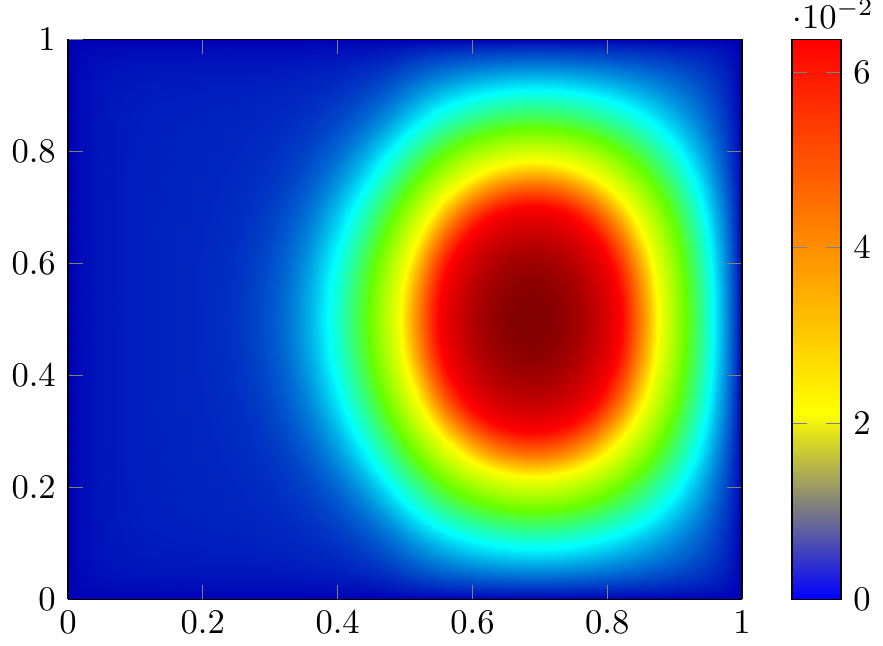}}\quad
\subfigure[3d view optimal state]{\includegraphics[height=5cm]{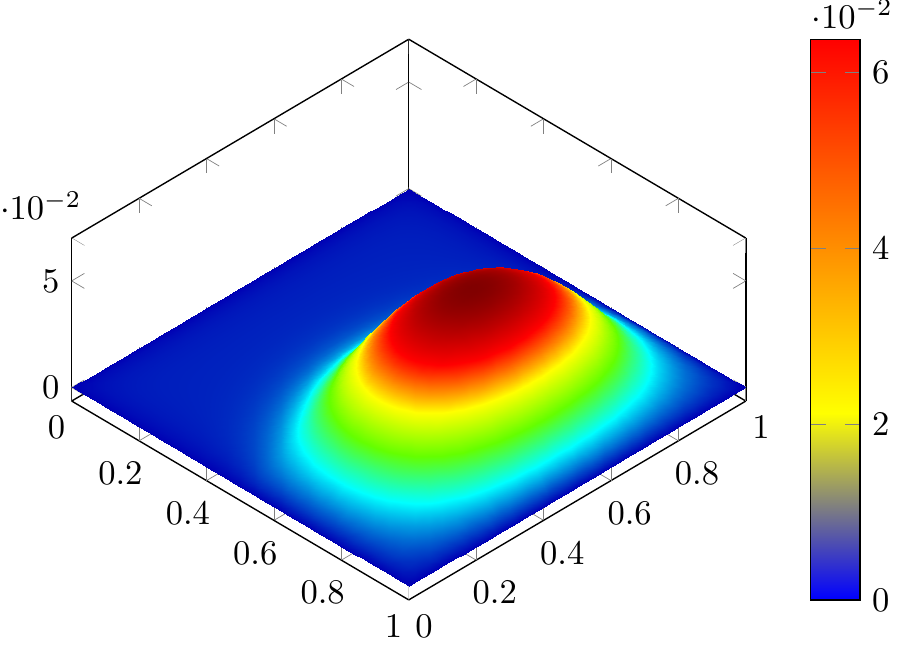}}

\caption{Results for the unperturbed case}
\label{0}
\end{center}
\end{figure}

\begin{figure}[htp]
\begin{center}
\subfigure[Optimal potential]{\includegraphics[height=5cm]{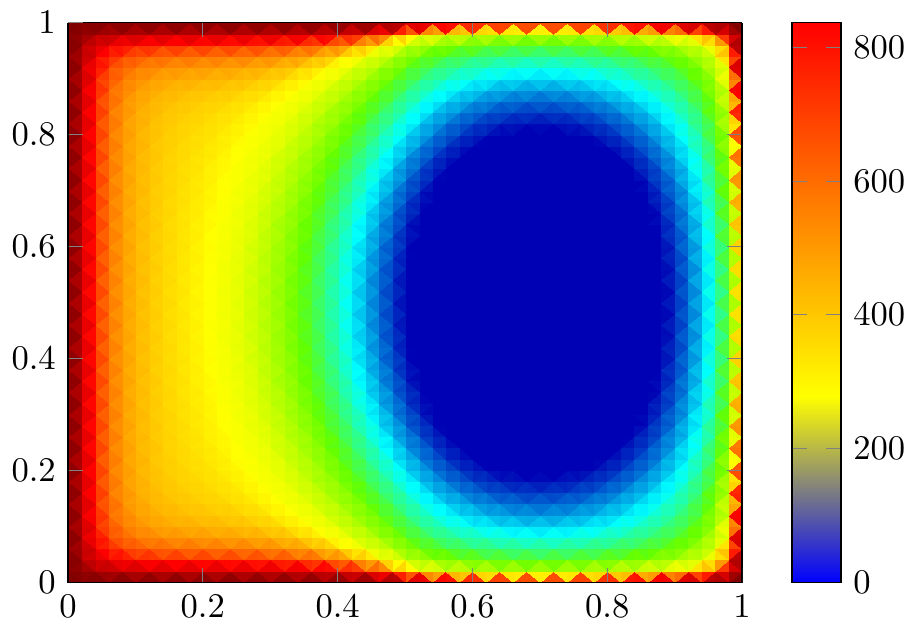}}
\subfigure[3d view optimal potential]{\includegraphics[height=5cm]{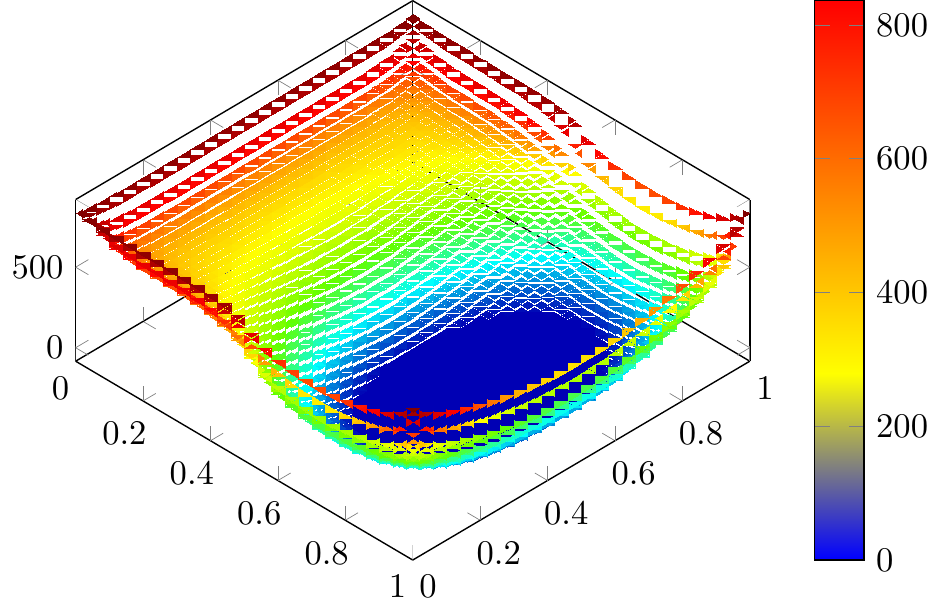}}\\
\subfigure[Optimal state]{\includegraphics[height=5cm]{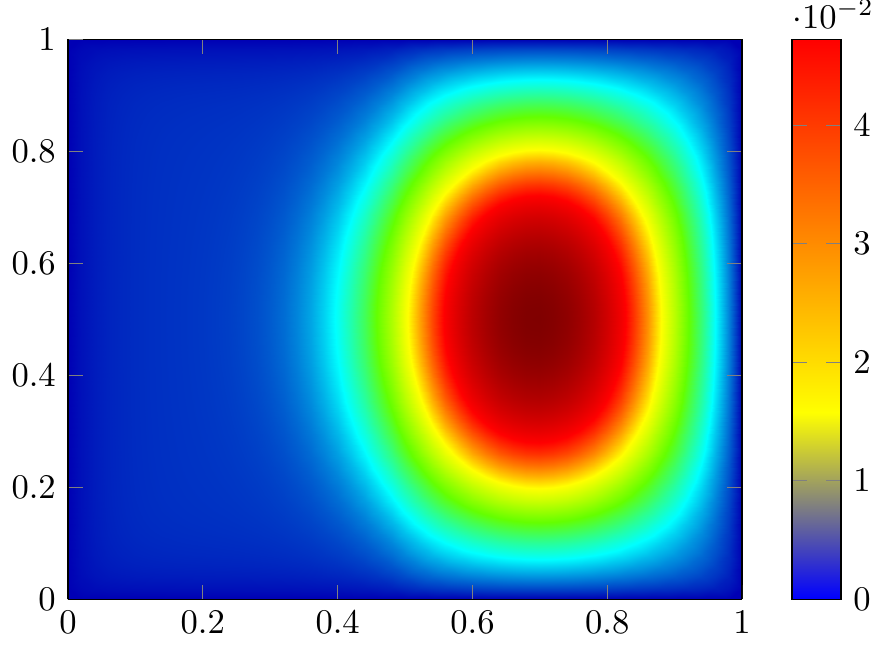}} \quad
\subfigure[3d view optimal state]{\includegraphics[height=5cm]{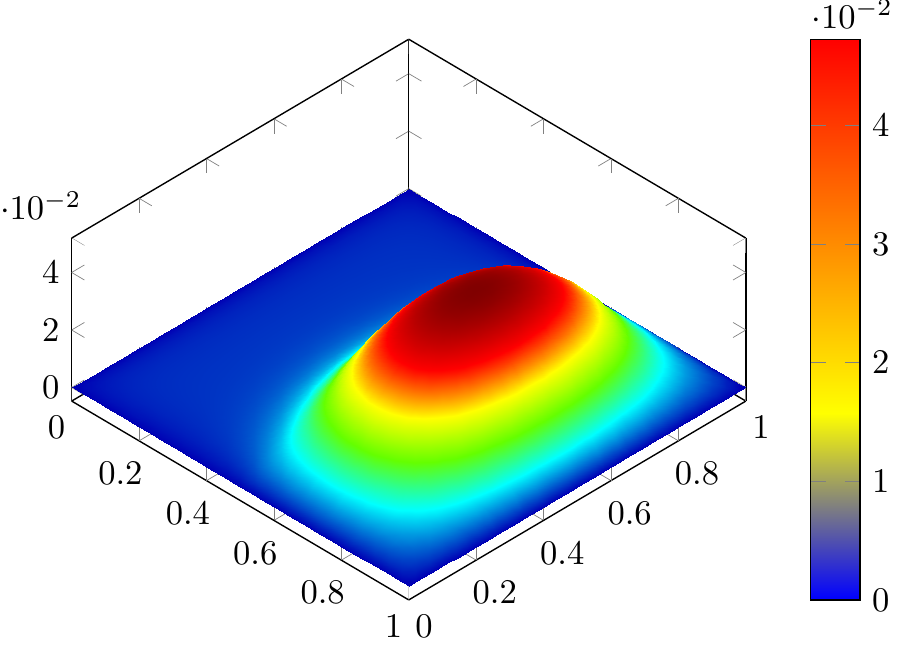}}

\caption{Results for the perturbed case with $\delta=0.25$}
\label{0.25}
\end{center}
\end{figure}

\bigskip

\ack The work of the first author is supported by the Spanish {\it Ministerio de Econom\'ia y Competitividad} through grant MTM2013-47053-P, the  {\it Junta de Castilla-La Mancha} and the European Fund for Regional Development through grant PEII-2014-010-P. J.C.B. also acknowledges the kind hospitality of the Dipartamento di Matematica of the Universit\`a di Pisa, during his stay on July 2015 and the funding support for such a stay from Universidad de Castilla-La Mancha. The work of the second author is part of the project 2010A2TFX2 {\it``Calcolo delle Variazioni''} funded by the Italian Ministry of Research and University. The second author is member of the Gruppo Nazionale per l'Analisi Matematica, la Probabilit\`a e le loro Applicazioni (GNAMPA) of the Istituto Nazionale di Alta Matematica (INdAM).


\bigskip
{\small\noindent
Jos\'e Carlos Bellido: Departamento de Matem\'aticas - ETSII, Universidad de Castilla la Mancha\\
13071 Ciudad Real - SPAIN\\
{\tt josecarlos.bellido@uclm.es}\\
{\tt http://matematicas.uclm.es/jbellido}

\bigskip\noindent
Giuseppe Buttazzo:
Dipartimento di Matematica,
Universit\`a di Pisa\\
Largo B. Pontecorvo 5,
56127 Pisa - ITALY\\
{\tt buttazzo@dm.unipi.it}\\
{\tt http://www.dm.unipi.it/pages/buttazzo/}

\bigskip\noindent
Bozhidar Velichkov:
Laboratoire Jean Kuntzmann, Universit\'e de Grenoble\\
38041 Grenoble cedex 09 - FRANCE\\
{\tt bozhidar.velichkov@gmail.com}\\
{\tt http://www.velichkov.it/}

\end{document}